\begin{document}
\newcommand{\tr}{{\mathrm{tr}}}
\newcommand{\bM}{{\mathbf{M}}}
\newcommand{\sgn}{{\mathrm{sgn}}}
\newcommand{\modndx}{\gamma_\mathrm{M}}
\newcommand{\parndx}{\gamma_\mathrm{P}}

\newtheorem{theorem}{Theorem}
\newtheorem{corollary}{Corollary}
\newtheorem{lemma}{Lemma}

\theoremstyle{definition}
\newtheorem{definition}{Definition}

\title[Dynamical Hamiltonian-Hopf Instabilities of Periodic Traveling Waves]{Dynamical Hamiltonian-Hopf Instabilities of Periodic Traveling Waves in Klein-Gordon Equations}
\author{R. Marangell}
\address{School of Mathematics and Statistics\\University of Sydney}
\email{robert.marangell@sydney.edu.au}
\author{P. D. Miller}
\address{Department of Mathematics\\University of Michigan}
\email{millerpd@umich.edu}
\date{\today}
\keywords{Periodic traveling waves; Stability; Klein-Gordon equation}
\subjclass[2010]{35L71, 35B10, 35C07, 35B35, 34B30, 47A10}
\begin{abstract}
We study the unstable spectrum close to the imaginary axis for the linearization of the nonlinear Klein-Gordon equation about a periodic traveling wave in a co-moving frame. We define dynamical Hamiltonian-Hopf instabilities as points in the stable spectrum that are accumulation points for unstable spectrum, and show how they can be determined from the knowledge of the discriminant of an associated Hill's equation.  This result allows us to give simple criteria for the existence of dynamical Hamiltonian-Hopf instabilities in terms of instability indices previously shown to be useful in stability analysis of periodic traveling waves.
\end{abstract}
\maketitle

\section{Introduction}
The Klein-Gordon equation
\begin{equation}
\frac{\partial^2 u}{\partial t^2}-\frac{\partial^2 u}{\partial x^2} + V'(u)=0,
\label{eq:KG}
\end{equation}
where $V:\mathbb{R}\to\mathbb{R}$ is a $C^2$ potential, is one of the most ubiquitous models for nonlinear wave motion in one space dimension.  Perhaps the earliest physical application of this equation is its use by Gordon and Klein \cite{Gordon26,Klein27}, in the case where $V$ is a linear function of $u$, as a model for the relativistic motion of charged particles with effective mass proportional to $\sqrt{V'}$.  The case of $V$ nonlinear had actually occurred earlier (and in different, characteristic coordinates) in the geometry of surfaces of constant negative curvature, see for example, \cite{Eisenhart09}.  The latter application corresponds to the special case of $V(u)=-\cos(u)$,
in which \eqref{eq:KG} is known as the sine-Gordon equation.  The sine-Gordon equation has seen physical applications ranging from the modeling of vibrations of DNA molecules through quantum field theory, and for a thorough overview we refer to the review paper of Barone et al., \cite{BaroneEMS71}.

This note is concerned with periodic traveling wave solutions of nonlinear Klein-Gordon equations and their linearized stability properties.  Of particular interest to us here will be the behavior of the spectrum near, but not on, the imaginary axis in the complex plane.
\subsection{Linear stability analysis of periodic traveling waves} 
Let  $c\neq \pm 1$ be a constant velocity, and consider a traveling wave solution $u(x,t)=f(x-ct)$
of the nonlinear Klein-Gordon equation \eqref{eq:KG}.  The wave profile $f$ satisfies the ordinary differential equation
\begin{equation}
(c^2-1)f''(z)+V'(f(z))=0.
\label{eq:KG-traveling-wave}
\end{equation}
Under suitable conditions on $V$ (for example if $V$ has at least one local extremum)\footnote{In this paper, we have followed the setup and notation used in \cite{JonesMMP14}. While we intend for the current paper to stand alone (and in particular the spectral features considered here are not addressed in \cite{JonesMMP14}), we refer the reader therein for a detailed account of periodic traveling waves for the nonlinear Klein-Gordon equation and many of their stability properties.}, there will exist traveling waves for which $f'(z)$ is a periodic function of $z=x-ct$ with fundamental period $T$.  We call such solutions \emph{periodic traveling waves}.  

It follows from \eqref{eq:KG-traveling-wave} that $V^{(p)}(f(z))$ is a periodic function for all $p=0,1,2,\dots$ (the fundamental period of which may be an integer fraction of $T$).  The linearized stability properties of the periodic traveling wave $u=f(x-ct)$ are determined from the spectral problem
\begin{equation}
(c^2-1)w''(z)-2c\lambda w'(z)+(\lambda^2+V''(f(z)))w(z)=0,\quad\lambda\in\mathbb{C}.
\label{eq:spectral-problem}
\end{equation}
Indeed, every solution of \eqref{eq:spectral-problem} gives rise to a solution of the linearized Klein-Gordon equation of the form $w(x-ct)e^{\lambda t}$, for which $\lambda\in\mathbb{C}$ is the exponential growth rate.
Equation \eqref{eq:spectral-problem} is a linear ordinary differential equation with periodic coefficients of period $T$.
The (Floquet) \emph{spectrum} of this problem is defined as the set $\sigma\subset\mathbb{C}$ of values of $\lambda$ for which \eqref{eq:spectral-problem} has a nontrivial solution $w:\mathbb{R}\to\mathbb{C}$ that is a bounded function.  Equivalently, $\lambda\in\sigma$ if and only if there exists a nontrivial solution $w(z)$ of Bloch form:  $w(z)=e^{i\theta z/T}W(z)$, where $\theta\in\mathbb{R}$ and $W(z+T)=W(z)$.  The phase factor $e^{i\theta}$ picked up by the Bloch solution as $z$ increases through a period is called the \emph{Floquet multiplier} of the solution and is denoted $\mu=\mu(\lambda)$.  More generally, for each $\lambda\in\mathbb{C}$ one may define two Floquet multipliers $\mu=\mu(\lambda)$ as the eigenvalues of the (entire) \emph{monodromy matrix} \begin{equation}
\mathbf{M}(\lambda):=\begin{bmatrix} w_1(T;\lambda) & w_2(T;\lambda)\\w_1'(T;\lambda) & w_2(T;\lambda)\end{bmatrix},
\end{equation}
where $w_j(z;\lambda)$, $j=1,2$ are the unique solutions of \eqref{eq:spectral-problem} satisfying the initial conditions
\begin{equation}
w_1(0;\lambda)=w_2'(0;\lambda)=1\quad\text{and}\quad
w_1'(0;\lambda)=w_2(0;\lambda)=0.
\label{eq:canonical-IC}
\end{equation}
The spectrum $\sigma$ may then equivalently be characterized as the set of $\lambda\in\mathbb{C}$ for which at least one Floquet multiplier has unit modulus, and hence may be written in the form $\mu(\lambda)=e^{i\theta}$ for some $\theta\in\mathbb{R}$. Because the Klein-Gordon equation \eqref{eq:KG} is a real Hamiltonian system, $\sigma$ is symmetric with respect to reflection through the real and imaginary axes (see \cite[Section 3.3.1]{JonesMMP14} for details). Symmetry with respect to the imaginary axis means that growth rates $\lambda$ with positive real parts are always paired with growth rates with negative real parts.  For this reason we say that if $\sigma\subset i\mathbb{R}$ then the periodic traveling wave $u=f(x-c t)$ is said to be \emph{spectrally stable} and otherwise is \emph{spectrally unstable}.  More generally, we call $\sigma\cap i\mathbb{R}$ the \emph{stable spectrum} and $\sigma\cap(\mathbb{C}\setminus i\mathbb{R})$ the \emph{unstable spectrum}.
The spectrum $\sigma$ is a closed set consisting of a union of smooth arcs.

The spectral problem \eqref{eq:spectral-problem} is not a standard eigenvalue problem as the spectral parameter $\lambda$ enters both linearly and quadratically.   Consider however the substitution suggested by Scott \cite{Scott69}:  $w(z)=e^{c\lambda z/(c^2-1)}y(z)$, which transforms \eqref{eq:spectral-problem} into \emph{Hill's equation}:
\begin{equation}
y''(z)+P(z)y(z)=\nu y(z),\quad P(z):=\frac{V''(f(z))}{c^2-1},\quad \nu=\nu(\lambda):=\left(\frac{\lambda}{c^2-1}\right)^2.
\label{eq:Hill}
\end{equation}
The natural spectral parameter is denoted $\nu\in\mathbb{C}$.  We may consider the well-defined basis $y_j(z;\nu)$, $j=1,2$ of solutions of \eqref{eq:Hill} satisfying initial conditions of the form \eqref{eq:canonical-IC}.
In a completely analogous way, we may then define the (entire) monodromy matrix for Hill's equation as
\begin{equation}
\mathbf{M}^\mathrm{H}(\nu):=\begin{bmatrix}y_1(T;\nu) & y_2(T;\nu)\\y'_1(T;\nu) & y_2'(T;\nu)
\end{bmatrix}
\end{equation}
having Hill Floquet multipliers $\mu^\mathrm{H}(\nu)$ as its eigenvalues.  The spectrum $\Sigma^\mathrm{H}$ of Hill's equation is the set of $\nu\in\mathbb{C}$ for which one of the multipliers $\mu^\mathrm{H}(\nu)$ has unit modulus.  Because $\det(\mathbf{M}^\mathrm{H}(\nu))=1$ by Abel's Theorem, the Hill Floquet multipliers are completely determined by the trace of the Hill monodromy matrix alone, which is typically (see e.g. \cite{MagnusW04}) called the \emph{Hill discriminant}:
\begin{equation}
\Delta^\mathrm{H}(\nu):=\tr(\mathbf{M}^\mathrm{H}(\nu)).
\end{equation}
The spectrum $\Sigma^\mathrm{H}$ thus corresponds to the condition that $\Delta^\mathrm{H}(\nu)\in [-2,2]$.  It is well-known \cite{MagnusW04} that $\Sigma^\mathrm{H}$ is a closed subset of the real $\nu$-axis (as the Floquet spectral problem for Hill's equation \eqref{eq:Hill} is selfadjoint), that it is bounded above but unbounded below, and that it consists of a disjoint union of closed intervals, the endpoints of which constitute the simple periodic and antiperiodic spectrum for \eqref{eq:Hill}.

The Hill spectrum $\Sigma^\mathrm{H}$ may be pulled back to the $\lambda$-plane via the relation $\nu=\nu(\lambda)$ given in \eqref{eq:Hill}, resulting in a set we denote $\sigma^\mathrm{H}$.  A key point is that in general $\sigma\neq\sigma^\mathrm{H}$ due to the fact that unless $\lambda$ is purely imaginary the exponential factor in Scott's substitution is not bounded on $\mathbb{R}$.  In fact it is shown in \cite{JonesMMP14} that $\sigma\cap\sigma^\mathrm{H}=\sigma^\mathrm{H}\cap i\mathbb{R}$.

\subsection{Dynamical Hamiltonian-Hopf instabilities}
This note is concerned with the analysis of $\sigma$ near the imaginary axis in the $\lambda$-plane.
In particular, we wish to characterize those $\lambda\in i\mathbb{R}$ near which there may be unstable spectrum.  Consider the following definition.
\begin{definition}[dynamical Hamiltonian-Hopf instabilities]
Let $\lambda$ be a nonzero imaginary number.
The periodic traveling wave $u=f(x-ct)$ is said to exhibit a dynamical Hamiltonian-Hopf instability at $\lambda$ if for every neighborhood $U$ of $\lambda$, $\sigma\cap U\setminus i\mathbb{R}\neq\emptyset$.
\end{definition}
Thus $f$ has a dynamical Hamiltonian-Hopf instability at $\lambda\in i\mathbb{R}$ if there is unstable spectrum arbitrarily close to $\lambda$.  Since $\sigma$ is closed, it is neccessary that $\lambda\in\sigma$ for $f$ to exhibit a dynamical Hamiltonian-Hopf instability at $\lambda\in i\mathbb{R}$.  Moreover, since $\sigma$ and $\sigma^\mathrm{H}$ agree on the imaginary axis, and since $\sigma^\mathrm{H}\cap i\mathbb{R}$ consists of a union of closed imaginary intervals, a point $\lambda\in i\mathbb{R}$ of dynamical Hamiltonian-Hopf instability simultaneously is contained in an imaginary interval of stable spectrum and is an accumulation point of non-imaginary unstable spectrum.  This local structure of $\sigma$ is therefore reminiscent of the paths taken by eigenvalues of a structurally unstable system undergoing a Hamiltonian-Hopf bifurcation, which explains the terminology.  Note, however, that a system exhibiting a {\em dynamical} Hamiltonian-Hopf instability can be structurally stable.

Let $C\subset\mathbb{R}$ denote the discrete set of real critical points of the Hill discriminant $\Delta^\mathrm{H}(\nu)$, and consider the 
function $F:\mathbb{R}\setminus C\to\mathbb{R}$  defined in terms of the Hill discriminant by
\begin{equation}
F(\nu):=-c^2T^2\frac{4-\Delta^\mathrm{H}(\nu)^2}{4\Delta_\nu^\mathrm{H}(\nu)^2},\quad
\Delta_\nu^\mathrm{H}(\nu):=\frac{d\Delta^\mathrm{H}}{d\nu}(\nu).
\label{eq:F-define}
\end{equation}
While $F(\nu)$ is not defined for $\nu=\nu_0\in C$, we may extend the definition to include the critical points as follows.  If $4-\Delta^\mathrm{H}(\nu_0)^2\neq 0$, then $F(\nu)$ has a definite sign $\pm$ in the limit $\nu\to\nu_0\in C$ and we set $F(\nu_0)=\pm\infty$.  On the other hand, the Hill discriminant $\Delta^\mathrm{H}(\cdot)$ has the property that all real roots of $4-\Delta^\mathrm{H}(\nu)^2$ are either simple or double roots \cite[Lemma 2.5]{MagnusW04}.  Therefore, if
$\nu_0\in C$ and $4-\Delta^\mathrm{H}(\nu_0)^2=0$, then by l'H\^{o}pital's rule $F(\nu)$ has a finite limit as $\nu\to \nu_0$, and we may define $F$ by continuity for such $\nu_0\in C$ (these are precisely the \emph{double points} of the Hill spectrum $\Sigma^\mathrm{H}$).
Our main result is the following.
\begin{theorem}
The periodic traveling wave $u=f(x-ct)$ exhibits a dynamical Hamiltonian-Hopf instability at a nonzero point $\lambda\in i\mathbb{R}$ if and only if $\nu(\lambda)=F(\nu(\lambda))$ where $\nu(\lambda)$ is defined in \eqref{eq:Hill}.
\label{theorem-HH-characterize}
\end{theorem}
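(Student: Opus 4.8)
The plan is to exploit the relation between the monodromy matrix $\mathbf{M}(\lambda)$ of \eqref{eq:spectral-problem} and the Hill monodromy $\mathbf{M}^\mathrm{H}(\nu)$. Scott's substitution $w(z)=e^{\kappa z}y(z)$ with $\kappa:=c\lambda/(c^2-1)$ intertwines the two problems, and after absorbing the shear coming from $w'=\kappa w+e^{\kappa z}y'$ into a constant unipotent conjugation $S$ one obtains $\mathbf{M}(\lambda)=e^{\kappa T}S\,\mathbf{M}^\mathrm{H}(\nu(\lambda))\,S^{-1}$, so the Floquet multipliers are $\mu_\pm(\lambda)=e^{\kappa T}\mu_\pm^\mathrm{H}(\nu(\lambda))$. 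Since $c$, $T$, $c^2-1$ are real we have $\mathrm{Re}(\kappa T)=\beta\,\mathrm{Re}\,\lambda$ with $\beta:=cT/(c^2-1)$, whence $|\mu_\pm(\lambda)|=e^{\beta\,\mathrm{Re}\,\lambda}|\mu_\pm^\mathrm{H}(\nu(\lambda))|$. Consequently, for $\lambda$ near a fixed point $\lambda_0=i\omega_0\in i\mathbb{R}$ with $\omega_0\neq0$, one has $\lambda\in\sigma$ exactly when $\log|\mu_+^\mathrm{H}(\nu(\lambda))|+\beta\,\mathrm{Re}\,\lambda=0$ or $\log|\mu_-^\mathrm{H}(\nu(\lambda))|+\beta\,\mathrm{Re}\,\lambda=0$.

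The crucial point is that near $\lambda_0$ each of these is the vanishing of the real part of a holomorphic function. A dynamical Hamiltonian-Hopf instability at $\lambda_0$ forces $\lambda_0\in\sigma$, hence $\nu_0:=\nu(\lambda_0)<0$ lies in $\Sigma^\mathrm{H}$. Suppose first that $\nu_0$ is in the interior of a spectral band of \eqref{eq:Hill}, or is a double point; in either case $4-\Delta^\mathrm{H}(\nu)^2$ has a zero of even order at $\nu_0$ (or none), so $\mu_\pm^\mathrm{H}$ is holomorphic and nonzero near $\nu_0$, and thus $\Lambda_\pm(\lambda):=\beta\lambda+\log\mu_\pm^\mathrm{H}(\nu(\lambda))$ is holomorphic near $\lambda_0$ with $L_\pm(\lambda):=\log|\mu_\pm(\lambda)|=\mathrm{Re}\,\Lambda_\pm(\lambda)$; in particular $\sigma$ coincides near $\lambda_0$ with $\{L_+=0\}\cup\{L_-=0\}$. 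Two structural facts then drive the argument. First, for $\lambda$ on the imaginary axis near $\lambda_0$ we have $\nu(\lambda)\in\Sigma^\mathrm{H}$ and so $|\mu_\pm^\mathrm{H}(\nu(\lambda))|=1$, whence the imaginary axis near $\lambda_0$ lies in both $\{L_+=0\}$ and $\{L_-=0\}$. Second, a direct computation using $\mu_+^\mathrm{H}(\nu)\mu_-^\mathrm{H}(\nu)\equiv1$, $\mu_\pm^\mathrm{H}(\nu)=\frac12(\Delta^\mathrm{H}(\nu)\pm i\sqrt{4-\Delta^\mathrm{H}(\nu)^2})$ on the band, and $\nu'(\lambda_0)=2\lambda_0/(c^2-1)^2$ shows that $\Lambda_\pm'(\lambda_0)$ is real, and that the condition ``$\Lambda_+'(\lambda_0)=0$ or $\Lambda_-'(\lambda_0)=0$'' is, after inserting $\beta^2=c^2T^2/(c^2-1)^2$ and $\omega_0^2=-\nu_0(c^2-1)^2$, precisely $\nu_0=F(\nu_0)$ with $F$ as in \eqref{eq:F-define} (the double-point case being the l'H\^opital limit).

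It remains to identify ``$\Lambda_+'(\lambda_0)=0$ or $\Lambda_-'(\lambda_0)=0$'' with the presence of a dynamical Hamiltonian-Hopf instability, via the standard description of the zero set of $\mathrm{Re}$ of a holomorphic map. If $\Lambda_\pm'(\lambda_0)\neq0$ for both signs, each $\Lambda_\pm$ is a local biholomorphism, so $\{L_\pm=0\}=\Lambda_\pm^{-1}(i\mathbb{R})$ is a single analytic arc through $\lambda_0$; since the imaginary axis is such an arc contained in this set, the two coincide near $\lambda_0$, $\sigma$ agrees with $i\mathbb{R}$ near $\lambda_0$, and there is no instability. If instead, say, $\Lambda_+'(\lambda_0)=0$, then $\Lambda_+-\Lambda_+(\lambda_0)$ vanishes to some finite order $m\geq2$ at $\lambda_0$ --- finite because $\Lambda_+$ cannot be constant, else $\sigma$ would be two-dimensional near $\lambda_0$ --- and writing $\Lambda_+-\Lambda_+(\lambda_0)=g^m$ with $g$ a local biholomorphism, $\{L_+=0\}=g^{-1}\{\mathrm{Re}(w^m)=0\}$ is a union of $m$ analytic arcs through $\lambda_0$ crossing pairwise transversally; one of them is the imaginary axis, and each of the other $m-1\geq1$ arcs consists of non-imaginary points of $\sigma$ accumulating at $\lambda_0$, giving the instability. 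This settles the theorem when $\nu_0$ is an interior band point or a double point. The remaining cases are immediate: if $\nu_0\notin\Sigma^\mathrm{H}$ then $\lambda_0\notin\sigma$, and $\nu_0\neq F(\nu_0)$ since $\nu_0<0$ while $4-\Delta^\mathrm{H}(\nu_0)^2<0$ makes $F(\nu_0)>0$ or $F(\nu_0)=+\infty$; and if $\nu_0$ is a simple band edge, then $\Delta_\nu^\mathrm{H}(\nu_0)\neq0$ forces $F(\nu_0)=0\neq\nu_0$, while an analogous local analysis --- now $\mu_\pm^\mathrm{H}$ carries a square-root branch point at $\nu_0$, so $\Lambda_\pm(\lambda)=\mathrm{const}+\mathrm{const}\cdot\sqrt{\lambda-\lambda_0}+\cdots$ --- shows that near $\lambda_0$ the set $\{L_\pm=0\}$ reduces to the single imaginary half-line of spectrum terminating at $\lambda_0$, so again there is no instability.

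The step I expect to be the main obstacle is the careful local analysis of $\{\mathrm{Re}\,\Lambda_\pm=0\}$, particularly the bookkeeping at the band edges of $\Sigma^\mathrm{H}$, where $\mu_\pm^\mathrm{H}$ is either holomorphic (double points) or has a square-root singularity (simple periodic and antiperiodic eigenvalues); by contrast, the open-band computation and the algebraic identification of $\Lambda_\pm'(\lambda_0)=0$ with $\nu_0=F(\nu_0)$ are routine.
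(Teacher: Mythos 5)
Your proposal is correct and follows essentially the same route as the paper: Scott's substitution to relate $\mu(\lambda)=e^{\lambda cT/(c^2-1)}\mu^\mathrm{H}(\nu(\lambda))$, a local expansion at $\lambda_0\in i\mathbb{R}$ showing that the instability occurs exactly when the derivative of (the logarithm of) one multiplier vanishes there — your condition $\Lambda_\pm'(\lambda_0)=0$ is precisely the paper's $\delta_+\delta_-=0$ (resp.\ $\hat\delta_+\hat\delta_-=0$ at double points), which is then identified with $\nu=F(\nu)$ — together with the separate treatment of simple band edges where the square-root branch point forces the spectrum to coincide locally with the imaginary band. Your packaging via the holomorphic functions $\Lambda_\pm$ and the normal form $g^m$ for the level set $\{\mathrm{Re}\,\Lambda_\pm=0\}$ is a slightly tidier rendering of the paper's univalence/higher-order-crossing argument, but it is the same proof in substance.
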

Note that if $\nu\le 0$ and $\nu=F(\nu)$, then from \eqref{eq:F-define} it follows that $\Delta^\mathrm{H}(\nu)^2\le 4$, implying that $\nu\in\Sigma^\mathrm{H}$.  Hence the corresponding values $\lambda$ lie in $\sigma^\mathrm{H}\cap i\mathbb{R}=\sigma\cap i\mathbb{R}$.

We give the proof of Theorem~\ref{theorem-HH-characterize} in \S\ref{sec:local-analysis} below.  The proof actually gives more detailed information about the spectrum $\sigma$ near a dynamical Hamiltonian-Hopf instability point on the imaginary axis; in particular in each sufficiently small neighborhood $U$ of a dynamical Hamiltonian-Hopf instability point $\lambda$, $\sigma\cap U$ consists of $U\cap i\mathbb{R}$ and a nonzero number of smooth curves crossing the imaginary axis transversely at $\lambda$ (the system of curves has to be symmetric with respect to reflection through the imaginary axis by Hamiltonian symmetry)  In particular, there cannot be any unstable spectrum that is tangent to the imaginary axis at $\lambda$.
This is to be contrasted with the behavior of the spectrum $\sigma$ near the origin in which case unstable spectrum can be tangent to the imaginary axis (weak modulational instability) or not (strong modulational instability) \cite[Definition 6.14]{JonesMMP14}.  Adapting this terminology, we may say that all dynamical Hamiltonian-Hopf instabilities are necessarily strong.  

One implication of Theorem~\ref{theorem-HH-characterize} is the following.
\begin{corollary}
Let the nonzero value $\lambda\in\sigma\cap i\mathbb{R}$ be such that $\nu(\lambda)$ is the endpoint of an interval of $\Sigma^\mathrm{H}$, that is, a simple periodic or antiperiodic point of $\Sigma^\mathrm{H}$.  Then the periodic traveling wave $u=f(x-ct)$ does not exhibit any dynamical Hamiltonian-Hopf instability at $\lambda$.
\label{corollary-endpoints-same}
\end{corollary}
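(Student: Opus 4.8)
\emph{Proof proposal.} The plan is to deduce the corollary directly from Theorem~\ref{theorem-HH-characterize}: by that theorem it is enough to show that $\nu(\lambda)\neq F(\nu(\lambda))$ whenever $\nu(\lambda)$ is a simple periodic or antiperiodic point of $\Sigma^\mathrm{H}$.

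First I would record the two defining properties of such a point $\nu_0:=\nu(\lambda)$. Being a periodic (resp.\ antiperiodic) point means $\Delta^\mathrm{H}(\nu_0)=2$ (resp.\ $\Delta^\mathrm{H}(\nu_0)=-2$), so in either case $4-\Delta^\mathrm{H}(\nu_0)^2=0$. Being a \emph{simple} such point means that $\nu_0$ is a simple root of $4-\Delta^\mathrm{H}(\nu)^2$; since $\Delta^\mathrm{H}(\nu_0)=\pm 2\neq 0$, this forces $\Delta_\nu^\mathrm{H}(\nu_0)\neq 0$, and in particular $\nu_0\notin C$. Consequently $F(\nu_0)$ is given directly by the formula \eqref{eq:F-define} rather than through the extension to critical points, and since its numerator vanishes while its denominator does not, $F(\nu_0)=0$.

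It remains only to note that $\nu_0\neq 0$. Since $\lambda$ is a nonzero purely imaginary number and $c\neq\pm 1$, the relation $\nu=\nu(\lambda)$ in \eqref{eq:Hill} gives $\nu_0=(\lambda/(c^2-1))^2<0$. Hence $\nu_0\neq 0=F(\nu_0)$, so by Theorem~\ref{theorem-HH-characterize} the periodic traveling wave $u=f(x-ct)$ does not exhibit a dynamical Hamiltonian-Hopf instability at $\lambda$.

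There is essentially no obstacle here; the only point requiring a moment's care is verifying that a simple periodic or antiperiodic point is not a critical point of $\Delta^\mathrm{H}$, so that $F$ is evaluated through its basic formula rather than through the continuity/sign extension used at points of $C$. One can also read the statement geometrically: at a band edge the graph of $\Delta^\mathrm{H}$ crosses $\pm 2$ transversally, so $F$ vanishes (linearly) there, whereas $\nu(\lambda)$ is bounded away from $0$; the curves of $\sigma$ described after Theorem~\ref{theorem-HH-characterize} can only emanate from a point where $\nu=F(\nu)$, which thus cannot occur at such an edge.
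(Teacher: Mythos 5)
Your proposal is correct and follows essentially the same route as the paper's own proof: a simple periodic or antiperiodic point has $\Delta^\mathrm{H}(\nu(\lambda))^2=4$ with $\Delta_\nu^\mathrm{H}(\nu(\lambda))\neq 0$, so $F(\nu(\lambda))=0$, while $\nu(\lambda)<0$ since $\lambda$ is nonzero imaginary, and Theorem~\ref{theorem-HH-characterize} then excludes a dynamical Hamiltonian-Hopf instability. The extra care you take in noting that such a point is not a critical point of $\Delta^\mathrm{H}$ (so $F$ is given by its basic formula) is a fine, if implicit in the paper, clarification.
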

\begin{proof}
Since $\nu(\lambda)$ is a simple periodic or antiperiodic point of $\Sigma^\mathrm{H}$, 
$\Delta^\mathrm{H}(\nu(\lambda))^2=4$ while $\Delta_\nu^\mathrm{H}(\nu(\lambda))\neq 0$.  Therefore $F(\nu(\lambda))=0$, but $\nu(\lambda)<0$ as $\lambda$ is nonzero imaginary.
\end{proof}
Corollary~\ref{corollary-endpoints-same} implies that each nonzero periodic or antiperiodic endpoint of a band of $\sigma^\mathrm{H}$ on the imaginary axis has a complex neighborhood in which $\sigma$ and $\sigma^\mathrm{H}$ agree exactly.  Some numerical calculations published in an earlier paper \cite{JonesMMP13} seem to contradict this fact (see in particular the two lower panels\footnote{The lower right-hand panel of Figure~2 of \cite{JonesMMP13} is mislabeled and should instead read ``Superluminal Librational'' as indicated in the caption of the figure.} of Figure~2 of \cite{JonesMMP13}). However, it is now clear that this apparent contradiction was due to poor choice of parameter values that made it appear, at the level of resolution of the computations presented, that unstable spectrum bifurcated from endpoints of bands of stable spectrum.  Choosing different velocities $c$ or energies $E$ results in more representative pictures.  A more representative plot of the type in the lower left-hand panel of Figure~2 of \cite{JonesMMP13} is shown in Figure~\ref{fig:SuperluminalRotational}, and a more representative plot of the type in the lower right-hand panel of Figure~2 of \cite{JonesMMP13} is shown in Figure~\ref{fig:SuperluminalLibrational}.
See \cite{JonesMMP13} for a definition of the energy parameter $E$ that is referred to in the figure captions.
\begin{figure}[h]
\includegraphics{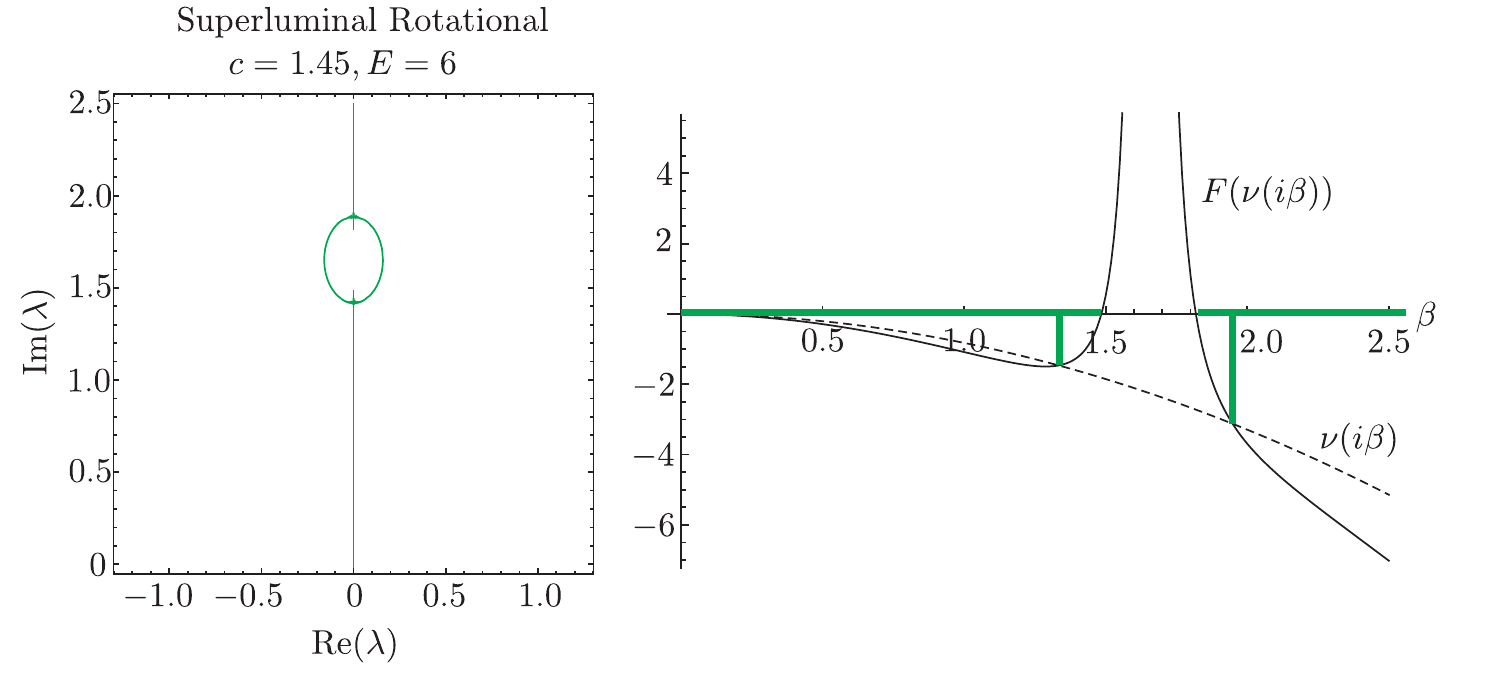}
\caption{Left panel:  the spectrum $\sigma$ in the upper half-plane of a periodic traveling wave for the sine-Gordon equation ($V(u)=1-\cos(u)$) that is of rotational type (that is, $f$ is not periodic although $f'$ is) with energy $E=6$ and wave speed $c=1.45>1$.  Right panel:  plots of $F(\nu(i\beta))$ (solid) and $\nu(i\beta)$ (dashed).  The roots of $F(\nu(i\beta))$ correspond to the endpoints of the bands of $\sigma$ on the imaginary axis, while the roots of $F(\nu(i\beta))-\nu(i\beta)$ correspond to the points of dynamical Hamiltonian-Hopf instability.  Consistent with Corollary~\ref{corollary-endpoints-same}, the dynamical Hamiltonian-Hopf instabilities lie in the \emph{interior} of bands of stable spectrum.}
\label{fig:SuperluminalRotational}
\end{figure}
\begin{figure}[h]
\includegraphics{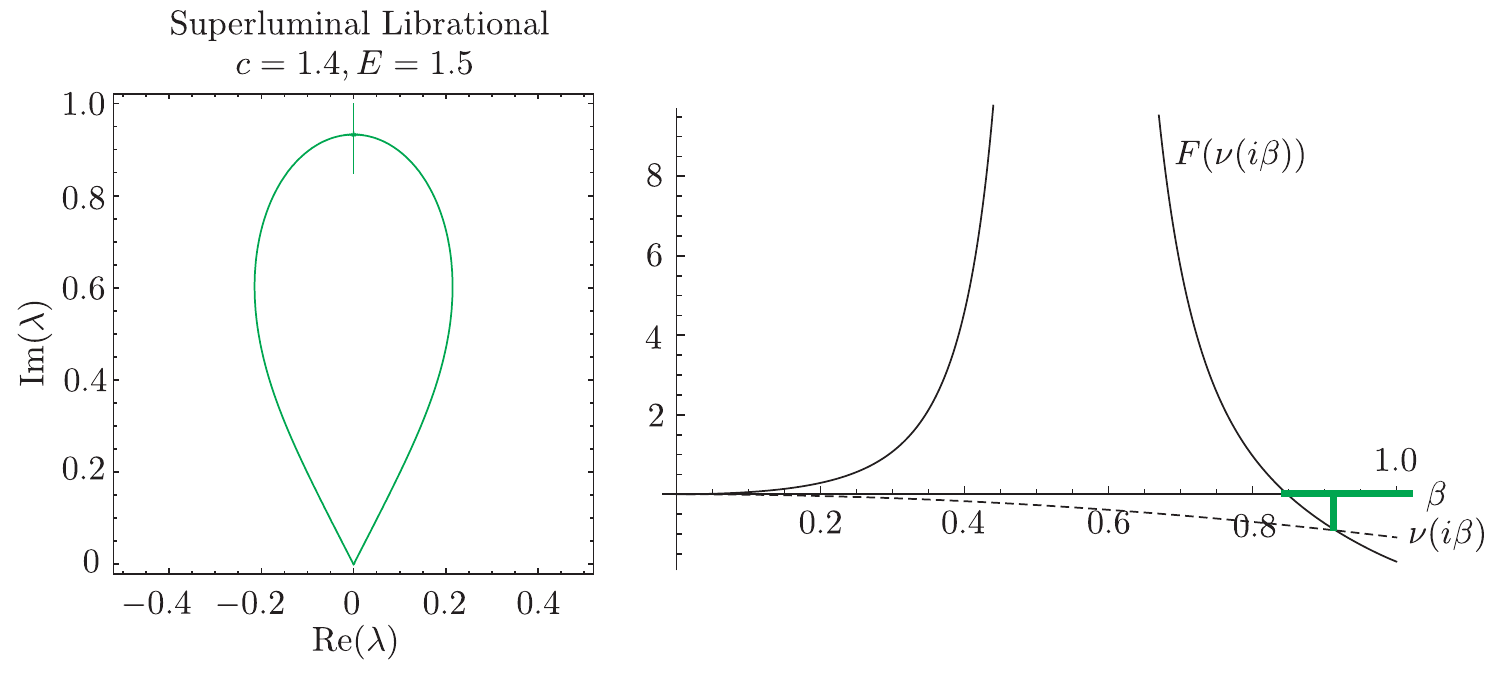}
\caption{As in Figure~\ref{fig:SuperluminalRotational}, but for a periodic traveling wave for the sine-Gordon equation that is of librational type (both $f$ and $f'$ are periodic) with energy $E=1.5$ and wave speed $c=1.4>1$.}
\label{fig:SuperluminalLibrational}
\end{figure}

Other corollaries of Theorem~\ref{theorem-HH-characterize} provide simple conditions under which the existence of a dynamical Hamiltonian-Hopf instability can be guaranteed.  All of the following results are obtained essentially by applying the Intermediate Value Theorem to $F(\nu)-\nu$ for $\nu<0$.  Of course, since $F(\nu)$ blows up at critical points of $\Delta^\mathrm{H}(\cdot)$ that are not double points of $\Sigma^\mathrm{H}$, the hypothesis of continuity is not valid.  However since $F(\nu)$ always blows up with a definite sign, it suffices to consider not the function $F(\nu)-\nu$ but rather the continuous function $\tanh(F(\nu)-\nu)$, the sign of which is exactly the same as that of $F(\nu)-\nu$ for each $\nu<0$.  We may calculate this sign in three simple situations.
\begin{lemma}
Suppose that $\nu<0$ lies in a gap of $\Sigma^\mathrm{H}$. Then $F(\nu)-\nu>0$.
\label{lemma-negative-gap}
\end{lemma}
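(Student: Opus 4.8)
The plan is entirely elementary: the hypothesis that $\nu$ lies in a gap of $\Sigma^\mathrm{H}$ fixes the sign of every factor appearing in the definition \eqref{eq:F-define} of $F$, and the asserted inequality then follows by inspection. So this is the easiest of the three sign computations referred to above; the remaining two (for $\nu$ in a band of $\Sigma^\mathrm{H}$, and at distinguished points) are the ones that will draw on finer structural properties of the Hill discriminant.

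First I would recall the characterization of the Hill spectrum in terms of the discriminant, namely that $\nu\in\Sigma^\mathrm{H}$ exactly when $\Delta^\mathrm{H}(\nu)\in[-2,2]$, i.e.\ $\Delta^\mathrm{H}(\nu)^2\le 4$. A gap of $\Sigma^\mathrm{H}$ is, by definition, a connected component of $\mathbb{R}\setminus\Sigma^\mathrm{H}$ (either one of the bounded gaps between consecutive bands, or the unbounded component lying below the spectrum). Hence $\nu$ lying in a gap forces the strict inequality $\Delta^\mathrm{H}(\nu)^2>4$, equivalently $4-\Delta^\mathrm{H}(\nu)^2<0$.

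Next I would substitute this sign information into \eqref{eq:F-define}. The denominator $4\Delta_\nu^\mathrm{H}(\nu)^2$ is nonnegative, and since $T>0$ the numerator $-c^2T^2\bigl(4-\Delta^\mathrm{H}(\nu)^2\bigr)$ is nonnegative as well---strictly positive when $c\neq 0$, while if $c=0$ then $F\equiv 0$ and the conclusion $F(\nu)-\nu=-\nu>0$ is immediate from $\nu<0$. Thus, whenever $\nu\notin C$, we have $F(\nu)\ge 0$; and in the one remaining case, namely $\nu\in C$ lying in the gap---so that $\Delta_\nu^\mathrm{H}(\nu)=0$ but $4-\Delta^\mathrm{H}(\nu)^2\neq 0$---the extension of $F$ to critical points described above assigns $F(\nu)=+\infty$, since the limiting sign of the (strictly positive numerator)/(vanishing denominator) is $+$. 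In every case $F(\nu)\ge 0$, and because $\nu<0$ we get $F(\nu)-\nu>0$, which is the claim. The only point that deserves a word of care is precisely the behaviour of $F$ at a critical point of $\Delta^\mathrm{H}$ that happens to sit inside the gap, but this is exactly the contingency the $\pm\infty$ convention was introduced to cover, and here the relevant sign is unambiguously $+$; so there is no genuine obstacle here.
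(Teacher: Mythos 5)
Your proposal is correct and follows essentially the same route as the paper's own (one-line) proof: in a gap $4-\Delta^\mathrm{H}(\nu)^2<0$, so by \eqref{eq:F-define} $F(\nu)\ge 0$ (or $+\infty$ at a critical point inside the gap), and $\nu<0$ gives $F(\nu)-\nu>0$. You simply make explicit the sign bookkeeping, the $\pm\infty$ convention at critical points, and the trivial $c=0$ case, which the paper leaves implicit.
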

\begin{proof}
Gaps in the Hill's spectrum $\Sigma^\mathrm{H}$ are characterized by the inequality $\Delta^\mathrm{H}(\nu)^2>4$.  
\end{proof}
\begin{lemma}
The following asymptotic formula holds:
\begin{equation}
F(\nu)-\nu= (c^2-1)\nu + o(\nu\csc(T\sqrt{-\nu})^2),\quad \nu\to -\infty.
\label{eq:F-minus-nu-infinity}
\end{equation}
In particular, for large negative $\nu$ away from the points $\nu=-n^2\pi^2/T^2$, $n\in\mathbb{Z}$, $F(\nu)-\nu$ has the same sign as does $1-c^2$.
\label{lemma-negative-asymptotic}
\end{lemma}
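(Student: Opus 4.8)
The plan is to substitute the large-$|\nu|$ asymptotics of the Hill discriminant $\Delta^\mathrm{H}$ and of its $\nu$-derivative $\Delta^\mathrm{H}_\nu$ into the definition \eqref{eq:F-define} of $F$ and to observe that the two leading terms of the resulting expansion cancel, leaving $F(\nu)=c^2\nu+(\text{lower order})$. Throughout I write $s:=\sqrt{-\nu}>0$, so that $\nu=-s^2$, $\tfrac{ds}{d\nu}=-\tfrac1{2s}$, $s\to\infty$ as $\nu\to-\infty$, and $\csc(T\sqrt{-\nu})^2=1/\sin^2(sT)$.

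First I would record the needed asymptotics. In terms of $s$, Hill's equation \eqref{eq:Hill} reads $y''+(s^2+P(z))y=0$, and the fundamental solutions $y_1,y_2$ determined by \eqref{eq:canonical-IC} satisfy the Volterra integral equations
\begin{equation}
y_1(z)=\cos(sz)-\frac1s\int_0^z\sin(s(z-\zeta))P(\zeta)y_1(\zeta)\,d\zeta,\qquad
y_2(z)=\frac{\sin(sz)}{s}-\frac1s\int_0^z\sin(s(z-\zeta))P(\zeta)y_2(\zeta)\,d\zeta ,
\end{equation}
together with the formulas for $y_1',y_2'$ obtained by differentiation in $z$. Iterating the Neumann series (using $|\sin|,|\cos|\le1$ on $\mathbb R$), substituting back into the integrals defining $y_1(T)$ and $y_2'(T)$, and invoking the addition formula $\sin(s(T-\zeta))\cos(s\zeta)+\cos(s(T-\zeta))\sin(s\zeta)=\sin(sT)$ gives, uniformly for large real $s$,
\begin{equation}
\Delta^\mathrm{H}(\nu)=2\cos(sT)-\frac ps\sin(sT)+O(s^{-2}),\qquad p:=\int_0^TP(\zeta)\,d\zeta .
\end{equation}
Since every entry of $\mathbf{M}^\mathrm{H}(\nu)$ is entire in $\nu$, and the Volterra bounds persist (now with $|\sin|,|\cos|\le e^{T|\mathrm{Im}\,s|}$) in a fixed complex strip about the real $s$-axis, a Cauchy estimate on a circle of fixed radius permits differentiating the preceding display term by term (picking up the factor $\tfrac{ds}{d\nu}=-\tfrac1{2s}$), yielding
\begin{equation}
\Delta^\mathrm{H}_\nu(\nu)=\frac Ts\sin(sT)+\frac{pT}{2s^2}\cos(sT)+O(s^{-3}) .
\end{equation}

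Next I would perform the substitution. Squaring the two expansions gives $4-\Delta^\mathrm{H}(\nu)^2=4\sin^2(sT)+\tfrac{2p}s\sin(2sT)+O(s^{-2})$ and $4\Delta^\mathrm{H}_\nu(\nu)^2=\tfrac{4T^2}{s^2}\bigl(\sin^2(sT)+\tfrac p{2s}\sin(2sT)+O(s^{-2})\bigr)$. Inserting these into \eqref{eq:F-define}, the key point is that the two $O(s^{-1})$ corrections coincide, so they cancel on dividing:
\begin{equation}
F(\nu)=-c^2s^2\,\frac{\sin^2(sT)+\tfrac p{2s}\sin(2sT)+O(s^{-2})}{\sin^2(sT)+\tfrac p{2s}\sin(2sT)+O(s^{-2})}=-c^2s^2+\frac{O(1)}{\sin^2(sT)+O(s^{-1})} .
\end{equation}
Since $\nu=-s^2$ and $-c^2s^2=c^2\nu$, the last term is $o\bigl(\nu\csc(T\sqrt{-\nu})^2\bigr)$, whence $F(\nu)-\nu=\bigl(F(\nu)-c^2\nu\bigr)+(c^2-1)\nu=(c^2-1)\nu+o\bigl(\nu\csc(T\sqrt{-\nu})^2\bigr)$, which is \eqref{eq:F-minus-nu-infinity}. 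For the final assertion, observe that if $\nu$ remains a fixed distance from the points $-n^2\pi^2/T^2$ ($n\in\mathbb Z$), then $\sin^2(sT)$ is bounded away from $0$, so the remainder is $O(1)=o(s^2)$, and therefore $F(\nu)-\nu=(1-c^2)s^2+o(s^2)$ has, for $s$ large, the sign of $1-c^2$.

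The step I expect to be the main obstacle is making the error estimates genuinely uniform and rigorous: establishing convergence of the Neumann series for the solutions and their $z$-derivatives with the stated uniform $O(s^{-k})$ remainders, and justifying the term-by-term $\nu$-differentiation that passes from the expansion of $\Delta^\mathrm{H}$ to that of $\Delta^\mathrm{H}_\nu$ (the analyticity-in-$\nu$ plus Cauchy-estimate argument, or an equivalent direct estimate of the $\nu$-differentiated Volterra system, must be carried out carefully). One must also verify that the remainder in \eqref{eq:F-minus-nu-infinity} is controlled by the weight $\nu\csc(T\sqrt{-\nu})^2$ uniformly as $\nu\to-\infty$ down to — though not at — the points $sT\in\pi\mathbb Z$, near which $\Delta^\mathrm{H}_\nu$ may vanish and $F$ is undefined; this is exactly why the error is measured against that weight, and why the sign statement is confined to $\nu$ bounded away from $-n^2\pi^2/T^2$.
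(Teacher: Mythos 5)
Your proposal is correct and takes essentially the same route as the paper: compare Hill's equation with the constant-coefficient problem $y''=\nu y$ as $\nu\to-\infty$ to get $\Delta^\mathrm{H}(\nu)=2\cos(T\sqrt{-\nu})+\dots$ and $\Delta^\mathrm{H}_\nu(\nu)=T\sin(T\sqrt{-\nu})/\sqrt{-\nu}+\dots$, then substitute into \eqref{eq:F-define} and read off the sign away from $T\sqrt{-\nu}\in\pi\mathbb{Z}$. The only differences are technical: you retain the next-order correction $\int_0^T P\,d\zeta$ (which cancels between numerator and denominator, sharpening the error) and justify the $\nu$-derivative asymptotics by analyticity plus Cauchy estimates, whereas the paper differentiates Hill's equation in $\nu$ and uses variation of parameters as in Magnus--Winkler; either device suffices.
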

\begin{proof}
This follows from WKB-type asymptotics of solutions of Hill's equation \eqref{eq:Hill} for large negative $\nu$, that is, by approximating \eqref{eq:Hill} by the constant-coefficient equation $y''(z)=\nu y(z)$.  It is not hard to show that the special solutions $y_j(z;\nu)$, $j=1,2$, that are used to construct the monodromy matrix satisfy
\begin{equation}
y_1(z;\nu)=\cos(\sqrt{-\nu}z)+o(1)\quad\text{and}\quad y_2(z;\nu)=\frac{1}{\sqrt{-\nu}}\sin(\sqrt{-\nu}z)+o((-\nu)^{-1/2})
\end{equation}
as well as
\begin{equation}
y_1'(z;\nu)=-\sqrt{-\nu}\sin(\sqrt{-\nu}z)+o((-\nu)^{1/2})\quad\text{and}\quad
y_2'(z;\nu)=\cos(\sqrt{-\nu}z)+o(1),
\end{equation}
all holding in the limit $\nu\to -\infty$ uniformly on the interval $0\le z\le T$.    Therefore $\Delta^\mathrm{H}(\nu)=y_1(T;\nu)+y_2'(T;\nu)=2\cos(\sqrt{-\nu}T)+o(1)$ as $\nu\to -\infty$.  Differentiating 
Hill's equation with respect to $\nu$ and solving for the $\nu$-derivatives of $y_j(z;\nu)$ by the classical method of variation of parameters (as in \cite[Proof of Lemma 2.4]{MagnusW04}), one shows that also $\Delta^\mathrm{H}_\nu(\nu)=T\sin(\sqrt{-\nu}T)/\sqrt{-\nu}+o((-\nu)^{-1/2})$ as $\nu\to -\infty$.  With these ingredients, the claimed asymptotic formula \eqref{eq:F-minus-nu-infinity} follows immediately from \eqref{eq:F-define}.
\end{proof}
\begin{lemma}
If $\Delta_\nu^\mathrm{H}(0)\neq 0$, the following asymptotic formula holds:
\begin{equation}
F(\nu)-\nu=\frac{c^2T^2-\Delta_\nu^\mathrm{H}(0)}{\Delta_\nu^\mathrm{H}(0)}\nu + \mathcal{O}(\nu^2),\quad\nu\uparrow 0.
\label{eq:F-minus-nu-zero-1}
\end{equation}
If instead $\Delta_\nu^\mathrm{H}(0)=0$, then
\begin{equation}
F(\nu)-\nu=\frac{2c^2T^2-\Delta_{\nu\nu}^\mathrm{H}(0)}{\Delta_{\nu\nu}^\mathrm{H}(0)}\nu + \mathcal{O}(\nu^2),\quad \nu\uparrow 0.
\label{eq:F-minus-nu-zero-2}
\end{equation}
\label{lemma-negative-small}
\end{lemma}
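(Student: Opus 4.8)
The plan is to obtain both expansions by Taylor-developing $F(\nu)-\nu$ about $\nu=0$, the essential ingredient being the identity $\Delta^\mathrm{H}(0)=2$. To establish this I would differentiate the traveling-wave profile equation \eqref{eq:KG-traveling-wave} once in $z$, which shows that $y=f'$ solves Hill's equation \eqref{eq:Hill} at $\nu=0$. Since $f'$ is $T$-periodic and not identically zero, the vector $(f'(0),f''(0))^\top$ is a nonzero fixed point of the monodromy matrix $\mathbf{M}^\mathrm{H}(0)$, so $\mu^\mathrm{H}=1$ is one of its eigenvalues; because $\det\mathbf{M}^\mathrm{H}(0)=1$ the other eigenvalue is $1$ as well, and hence $\Delta^\mathrm{H}(0)=\tr\mathbf{M}^\mathrm{H}(0)=2$. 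In particular $4-\Delta^\mathrm{H}(0)^2=0$; it will also matter that $\Delta^\mathrm{H}(0)=+2$ and not $-2$, as this fixes the signs in the numerators of \eqref{eq:F-minus-nu-zero-1}--\eqref{eq:F-minus-nu-zero-2}.

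If $\Delta_\nu^\mathrm{H}(0)\neq 0$ (equivalently, $\nu=0$ is a simple periodic point of $\Sigma^\mathrm{H}$), the rest is essentially a one-line computation: writing $\Delta^\mathrm{H}(\nu)=2+\Delta_\nu^\mathrm{H}(0)\nu+\mathcal{O}(\nu^2)$ gives $4-\Delta^\mathrm{H}(\nu)^2=-4\Delta_\nu^\mathrm{H}(0)\nu+\mathcal{O}(\nu^2)$ and $4\Delta_\nu^\mathrm{H}(\nu)^2=4\Delta_\nu^\mathrm{H}(0)^2+\mathcal{O}(\nu)$, so substituting into \eqref{eq:F-define} yields $F(\nu)=c^2T^2\nu/\Delta_\nu^\mathrm{H}(0)+\mathcal{O}(\nu^2)$; subtracting $\nu$ produces \eqref{eq:F-minus-nu-zero-1}.

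If instead $\Delta_\nu^\mathrm{H}(0)=0$, then $\nu=0$ is a double point of $\Sigma^\mathrm{H}$, and I would first invoke \cite[Lemma 2.5]{MagnusW04} to conclude that the zero of $4-\Delta^\mathrm{H}(\nu)^2$ at $\nu=0$ is exactly of order two, so that $\Delta_{\nu\nu}^\mathrm{H}(0)\neq 0$ and the right-hand side of \eqref{eq:F-minus-nu-zero-2} makes sense. Then, expanding $\Delta^\mathrm{H}(\nu)=2+\tfrac12\Delta_{\nu\nu}^\mathrm{H}(0)\nu^2+\tfrac16\Delta_{\nu\nu\nu}^\mathrm{H}(0)\nu^3+\mathcal{O}(\nu^4)$ gives $4-\Delta^\mathrm{H}(\nu)^2=-2\Delta_{\nu\nu}^\mathrm{H}(0)\nu^2+\mathcal{O}(\nu^3)$ and $4\Delta_\nu^\mathrm{H}(\nu)^2=4\Delta_{\nu\nu}^\mathrm{H}(0)^2\nu^2+\mathcal{O}(\nu^3)$; the common factor of $\nu^2$ cancels (this is the l'H\^{o}pital step implicit in the extension of $F$ to double points), and inserting the ratio into \eqref{eq:F-define} and expanding to first order in $\nu$ produces \eqref{eq:F-minus-nu-zero-2}. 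Carrying the numerator and denominator to these orders requires, besides $\Delta^\mathrm{H}(0)=2$ and $\Delta_\nu^\mathrm{H}(0)=0$, the value of $\Delta_{\nu\nu}^\mathrm{H}(0)$, which I would compute exactly as in the proof of Lemma~\ref{lemma-negative-asymptotic}: differentiate Hill's equation with respect to $\nu$, solve for the $\nu$-derivatives of the $y_j(z;\nu)$ by variation of parameters, and evaluate the resulting formula for the second $\nu$-derivative of $\tr(\mathbf{M}^\mathrm{H})$ at $\nu=0$ using $\mathbf{M}^\mathrm{H}(0)=I$.

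The hard part is the double-point case \eqref{eq:F-minus-nu-zero-2}: the simple-periodic case uses only the value and first $\nu$-derivative of the discriminant at the origin and is immediate, whereas the double-point case needs both the structural input of \cite[Lemma 2.5]{MagnusW04} (to guarantee $\Delta_{\nu\nu}^\mathrm{H}(0)\neq 0$) and an honest variation-of-parameters computation of $\Delta_{\nu\nu}^\mathrm{H}(0)$, after which the expansion of $F(\nu)-\nu$ is again routine bookkeeping.
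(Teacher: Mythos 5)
Your route is the same as the paper's: show that $y=f'$ solves Hill's equation \eqref{eq:Hill} at $\nu=0$ and is $T$-periodic, conclude $\Delta^\mathrm{H}(0)=2$, invoke \cite[Lemma 2.5]{MagnusW04} for the dichotomy between $\Delta_\nu^\mathrm{H}(0)\neq 0$ and $\Delta_\nu^\mathrm{H}(0)=0$ with $\Delta_{\nu\nu}^\mathrm{H}(0)\neq 0$, and then Taylor-expand $F$ from \eqref{eq:F-define}. Your treatment of the nondegenerate case is complete and correct, and reproduces \eqref{eq:F-minus-nu-zero-1} exactly as intended.

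The gap is in the degenerate case, precisely at the step you dismiss as ``routine bookkeeping.'' The expansions you write are correct: $4-\Delta^\mathrm{H}(\nu)^2=-2\Delta_{\nu\nu}^\mathrm{H}(0)\nu^2+\mathcal{O}(\nu^3)$ and $4\Delta_\nu^\mathrm{H}(\nu)^2=4\Delta_{\nu\nu}^\mathrm{H}(0)^2\nu^2+\mathcal{O}(\nu^3)$. But inserting them into \eqref{eq:F-define} gives
\[
F(\nu)=-c^2T^2\,\frac{-2\Delta_{\nu\nu}^\mathrm{H}(0)+\mathcal{O}(\nu)}{4\Delta_{\nu\nu}^\mathrm{H}(0)^2+\mathcal{O}(\nu)}
=\frac{c^2T^2}{2\Delta_{\nu\nu}^\mathrm{H}(0)}+\mathcal{O}(\nu),
\]
so $F(\nu)-\nu$ tends to the nonzero constant $c^2T^2/(2\Delta_{\nu\nu}^\mathrm{H}(0))$ (negative, since $\Delta_{\nu\nu}^\mathrm{H}(0)<0$), not to $0$; this constant is exactly the value $F(0)$ assigned by continuity at a double point in the paragraph following \eqref{eq:F-define}, so no expansion consistent with that definition can start at order $\nu$ unless $cT=0$. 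Hence your claim that ``expanding to first order in $\nu$ produces \eqref{eq:F-minus-nu-zero-2}'' is not substantiated and, carried out honestly, fails: you must either exhibit additional structure at $\nu=0$ that removes the constant term (none is apparent from the ingredients you list) or confront the discrepancy with the stated formula explicitly. A secondary misdirection: you propose to compute $\Delta_{\nu\nu}^\mathrm{H}(0)$ by variation of parameters as in Lemma~\ref{lemma-negative-asymptotic}, but \eqref{eq:F-minus-nu-zero-2} is expressed in terms of $\Delta_{\nu\nu}^\mathrm{H}(0)$ itself, so no explicit value is needed (nor available for a general potential); all the expansion requires is that $\Delta^\mathrm{H}$ is an entire function of $\nu$. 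Note finally that the later corollaries only use \eqref{eq:F-minus-nu-zero-1}, so whatever resolution you adopt in the degenerate case does not affect Corollaries~\ref{corollary-index-product} and~\ref{corollary-blowup-in-gaps}.
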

\begin{proof}
It is easy to confirm by differentiation of \eqref{eq:KG-traveling-wave} with respect to $z$ that $y(z)=f'(z)$ is a solution of Hill's equation \eqref{eq:Hill} for $\nu=0$, so as $f'(z+T)=f'(z)$, it follows that $\nu=0$ is a periodic point of the Hill's spectrum $\Sigma^\mathrm{H}$, that is, $\Delta^\mathrm{H}(0)=2$.  By \cite[Lemma 2.5]{MagnusW04}, either $\Delta_\nu^\mathrm{H}(0)\neq 0$, or $\Delta_\nu^\mathrm{H}(0)=0$ but $\Delta_{\nu\nu}^\mathrm{H}(0)<0$.  In both cases it is easy to see that the apparent singularity in $F(\nu)$ at $\nu=0$ is removable, and the formulae \eqref{eq:F-minus-nu-zero-1} and \eqref{eq:F-minus-nu-zero-2} follow by Taylor expansion.
\end{proof} 
In \cite{JonesMMP14}, the sign $\modndx:=\sgn(\Delta_\nu^\mathrm{H}(0))$ (defined as $\modndx=0$ if $\Delta_\nu^\mathrm{H}(0)=0$) is called the \emph{modulational instability index} of the periodic traveling wave $u=f(x-ct)$.  If $\modndx=1$ then generically the spectrum $\sigma$ is tangent to (and possibly locally coincides with) the imaginary axis at the origin $\lambda=0$, while if $\modndx=-1$ then there are curves of unstable spectrum meeting the origin making nonzero angles with the imaginary axis (this latter condition is what characterizes a strong modulational instability of the periodic traveling wave $u=f(x-ct)$).  The sign of $c^2T^2-\Delta_\nu^\mathrm{H}(0)$ also carries meaning.
To see this, consider the function $D(\lambda,\mu):=\det(\mathbf{M}(\lambda)-\mu\mathbb{I})$, whose roots in the $\lambda$-plane given a fixed unit-modulus value $\mu=e^{i\theta}$, $\theta\in\mathbb{R}$, are points of the spectrum $\sigma$ corresponding to a Floquet multiplier $\mu(\lambda)=e^{i\theta}$.  In particular, the function $D(\lambda,1)$ is called the \emph{periodic Evans function} as it is an entire function whose roots are the periodic points of the spectrum $\sigma$.  By Abel's Theorem, $\det(\mathbf{M}(\lambda))=e^{2\lambda cT/(c^2-1)}$, and Scott's substitution shows that $\tr(\mathbf{M}(\lambda))=e^{\lambda c T/(c^2-1)}\tr(\mathbf{M}^\mathrm{H}(\nu(\lambda)))=e^{\lambda cT/(c^2-1)}\Delta^\mathrm{H}(\nu(\lambda))$.  Therefore, by Taylor expansion,
\begin{equation}
D(\lambda,1)=\frac{c^2T^2-\Delta_\nu^\mathrm{H}(0)}{(c^2-1)^2}\lambda^2 + \mathcal{O}(\lambda^3),\quad\lambda\to 0,
\end{equation}
so it follows that $\sgn(c^2T^2-\Delta_\nu^\mathrm{H}(0))=\sgn(D_{\lambda\lambda}(0,1))$.
In \cite{JonesMMP14}, the sign defined by $\parndx:=\sgn((c^2-1)D_{\lambda\lambda}(0,1))$ in the generic case that $D_{\lambda\lambda}(0,1)\neq 0$
is called the \emph{parity index} of the periodic traveling wave $u=f(x-ct)$.  The condition 
$\parndx=-1$ guarantees the existence of a positive real (and by Hamiltonian symmetry, a negative real) periodic point in the spectrum $\sigma$ (more precisely, this condition indicates that the number of positive roots of $D(\lambda,1)$ counted with multiplicity is odd).  These indices also can indicate the presence of dynamical Hamiltonian-Hopf instabilities as the following simple corollaries of Theorem~\ref{theorem-HH-characterize} show.

\begin{corollary}
In the generic case that $\Delta_\nu^\mathrm{H}(0)\neq c^2T^2$ (i.e., $D_{\lambda\lambda}(0,1)\neq 0$), the periodic traveling wave $u=f(x-ct)$ exhibits a dynamical Hamiltonian-Hopf instability if $\modndx\parndx=-1$, where $\modndx$ is the modulational instability index and 
$\parndx$ is the parity index.  
\label{corollary-index-product}
\end{corollary}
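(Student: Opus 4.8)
The plan is to apply the Intermediate Value Theorem to the function $g(\nu):=F(\nu)-\nu$ on the negative $\nu$-axis, using Theorem~\ref{theorem-HH-characterize} (and the remark following it) to convert a sign change of $g$ into a dynamical Hamiltonian-Hopf instability. Concretely, if I can exhibit some $\nu_*<0$ with $F(\nu_*)=\nu_*$, then by the remark after Theorem~\ref{theorem-HH-characterize} one has $\nu_*\in\Sigma^\mathrm{H}$, so the value $\lambda_*$ with $\lambda_*^2=(c^2-1)^2\nu_*$ is a nonzero imaginary point of $\sigma$, and by Theorem~\ref{theorem-HH-characterize} the wave $u=f(x-ct)$ exhibits a dynamical Hamiltonian-Hopf instability at $\lambda_*$. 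So everything reduces to producing a zero of $g$ on $(-\infty,0)$. As already indicated in the discussion preceding the lemmas, $F$ is either finite and continuous at a critical point $\nu_0$ of $\Delta^\mathrm{H}$ (when $\nu_0$ is a double point of $\Sigma^\mathrm{H}$) or blows up there with a definite sign; hence $\tanh(g(\nu))$ extends to a continuous function on all of $(-\infty,0)$ that vanishes exactly where $g$ does and has the same sign as $g$ wherever $g$ is defined. So it suffices to show $\tanh\circ g$ takes both signs on $(-\infty,0)$.

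Next I would compute the sign of $g$ at the two ends of the interval under the hypothesis $\modndx\parndx=-1$. This hypothesis forces $\modndx=\pm1$, hence $\Delta_\nu^\mathrm{H}(0)\neq0$, so the relevant near-zero asymptotics is \eqref{eq:F-minus-nu-zero-1} of Lemma~\ref{lemma-negative-small}. For $\nu\uparrow0$ this gives $\sgn(g(\nu))=-\sgn(\Delta_\nu^\mathrm{H}(0))\,\sgn(c^2T^2-\Delta_\nu^\mathrm{H}(0))$. Using $\modndx=\sgn(\Delta_\nu^\mathrm{H}(0))$, the identity $\sgn(c^2T^2-\Delta_\nu^\mathrm{H}(0))=\sgn(D_{\lambda\lambda}(0,1))$ recorded above, and $\parndx=\sgn((c^2-1)D_{\lambda\lambda}(0,1))$ (so that $\sgn(D_{\lambda\lambda}(0,1))=\sgn(c^2-1)\parndx$), this collapses to $\sgn(g(\nu))=-\sgn(c^2-1)\,\modndx\parndx=\sgn(c^2-1)$ for $\nu$ slightly negative. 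At the other end, by Lemma~\ref{lemma-negative-asymptotic} the leading term $(c^2-1)\nu$ dominates for $\nu$ large and negative and bounded away from the exceptional points $-n^2\pi^2/T^2$, so there $\sgn(g(\nu))=\sgn((c^2-1)\nu)=-\sgn(c^2-1)$.

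Since $c\neq\pm1$ these two signs are opposite, so the continuous function $\tanh\circ g$ takes both signs on $(-\infty,0)$ and therefore vanishes at some $\nu_*\in(-\infty,0)$, which is necessarily a point where $g(\nu_*)=0$; by the reduction of the first paragraph this yields the claimed dynamical Hamiltonian-Hopf instability. I do not expect any serious analytic obstacle here: the $\tanh$ device (already set up in the text) removes the only genuine difficulty, namely the poles of $F$, and the substantive input is entirely contained in Lemmas~\ref{lemma-negative-asymptotic} and \ref{lemma-negative-small} together with Theorem~\ref{theorem-HH-characterize}. The one place where care is needed is the bookkeeping tying $\modndx$ and $\parndx$ to $\sgn(\Delta_\nu^\mathrm{H}(0))$ and $\sgn(D_{\lambda\lambda}(0,1))$ and then to the sign of the coefficient of $\nu$ in \eqref{eq:F-minus-nu-zero-1}; getting the chain of sign identifications exactly right is the crux, and the computation above shows it comes out cleanly as $\sgn(g(0^-))=\sgn(c^2-1)=-\sgn(g(-\infty))$.
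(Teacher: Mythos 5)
Your proposal is correct and follows essentially the same route as the paper's own proof: combining Lemma~\ref{lemma-negative-asymptotic} with \eqref{eq:F-minus-nu-zero-1} of Lemma~\ref{lemma-negative-small}, applying the Intermediate Value Theorem to the continuous function $\tanh(F(\nu)-\nu)$ on $\nu<0$, and invoking Theorem~\ref{theorem-HH-characterize}. The only difference is that you spell out the sign bookkeeping (including noting that $\modndx\parndx=-1$ forces $\Delta_\nu^\mathrm{H}(0)\neq 0$) which the paper leaves implicit, and your chain of sign identifications is carried out correctly.
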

\begin{proof}
By Lemma~\ref{lemma-negative-asymptotic} and \eqref{eq:F-minus-nu-zero-1} of Lemma~\ref{lemma-negative-small}, the continuous function $\tanh(F(\nu)-\nu)$ has opposite signs for small and large negative $\nu$ if $\modndx\parndx=-1$, and hence there is some $\nu<0$ for which $\nu=F(\nu)$.
\end{proof}
Corollary~\ref{corollary-index-product} shows that a dynamical Hamiltonian-Hopf instability always occurs somewhere on the imaginary axis if either there is a strong modulational instability at the origin, or there exist an odd number of positive periodic points of $\sigma$ (but not both).  This result is consistent with the intuition that a curve of spectrum emanating from the origin that does not intersect the real axis should intersect the imaginary axis, while a curve of spectrum emanating from the real axis that does not return to the real axis or meet the origin should intersect the imaginary axis.  A result partially complementary to Corollary~\ref{corollary-index-product} is the following.
\begin{corollary}
In the generic case that $\Delta_\nu^\mathrm{H}(0)\neq c^2T^2$ (i.e., $D_{\lambda\lambda}(0,1)\neq 0$) but $\modndx\parndx=1$, the 
periodic traveling wave $u=f(x-ct)$ nonetheless exhibits a dynamical Hamiltonian-Hopf instability if there is a negative gap in the Hill's spectrum $\Sigma^\mathrm{H}$ and $c^2-1>0$.
\label{corollary-blowup-in-gaps}
\end{corollary}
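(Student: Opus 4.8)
The plan is to apply the Intermediate Value Theorem to the continuous function $\tanh(F(\nu)-\nu)$ on a suitable interval of negative $\nu$, exactly as in the proof of Corollary~\ref{corollary-index-product}, except that the required sign change will come from the gap hypothesis via Lemma~\ref{lemma-negative-gap} rather than from the behaviour at $\nu=-\infty$. By Theorem~\ref{theorem-HH-characterize}, together with the remark following it (any $\nu<0$ with $\nu=F(\nu)$ automatically lies in $\Sigma^\mathrm{H}$, so the corresponding $\lambda\in i\mathbb{R}\setminus\{0\}$ is a genuine point of $\sigma$), it suffices to produce a single $\nu<0$ with $F(\nu)=\nu$.

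First I would pin down the sign of the leading coefficient of $F(\nu)-\nu$ near $\nu=0$. Since $\modndx\parndx=1$ we have $\modndx\ne 0$, hence $\Delta_\nu^\mathrm{H}(0)\ne 0$, so \eqref{eq:F-minus-nu-zero-1} applies (and \eqref{eq:F-minus-nu-zero-2} is never needed). Combining $\parndx=\sgn((c^2-1)D_{\lambda\lambda}(0,1))$ with $\sgn(D_{\lambda\lambda}(0,1))=\sgn(c^2T^2-\Delta_\nu^\mathrm{H}(0))$ and the hypothesis $c^2-1>0$ gives $\parndx=\sgn(c^2T^2-\Delta_\nu^\mathrm{H}(0))$, so the coefficient $(c^2T^2-\Delta_\nu^\mathrm{H}(0))/\Delta_\nu^\mathrm{H}(0)$ in \eqref{eq:F-minus-nu-zero-1} has sign $\sgn(c^2T^2-\Delta_\nu^\mathrm{H}(0))\sgn(\Delta_\nu^\mathrm{H}(0))=\parndx\modndx=1$; it is nonzero by the genericity assumption $\Delta_\nu^\mathrm{H}(0)\ne c^2T^2$. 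Consequently there is $\epsilon>0$ such that $F(\nu)-\nu<0$ for all $\nu\in(-\epsilon,0)$.

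Next I would invoke the gap hypothesis: fix $\nu^*$ in a gap of $\Sigma^\mathrm{H}$ contained in $(-\infty,0)$. By Lemma~\ref{lemma-negative-gap}, $F(\nu^*)-\nu^*>0$, so in particular $\nu^*\notin(-\epsilon,0)$, whence $\nu^*\le-\epsilon$. Setting $\nu_0:=-\epsilon/2$ we then have $\nu^*<\nu_0<0$ with $F(\nu^*)-\nu^*>0$ and $F(\nu_0)-\nu_0<0$. On the compact interval $[\nu^*,\nu_0]$ the function $g(\nu):=\tanh(F(\nu)-\nu)$ is continuous — here one uses, as in the discussion preceding Lemma~\ref{lemma-negative-gap}, that $F$ blows up only with a definite sign at those critical points of $\Delta^\mathrm{H}(\cdot)$ that are not double points of $\Sigma^\mathrm{H}$, and extends continuously with a finite value across the double points — and $g(\nu^*)>0>g(\nu_0)$. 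By the Intermediate Value Theorem there is $\nu^{**}\in(\nu^*,\nu_0)$ with $g(\nu^{**})=0$; since $\tanh$ vanishes only at the origin, $F(\nu^{**})=\nu^{**}$ (finite), and $\nu^{**}<0$. Taking $\lambda^{**}:=i(c^2-1)\sqrt{-\nu^{**}}\ne 0$, which is purely imaginary and satisfies $\nu(\lambda^{**})=\nu^{**}=F(\nu^{**})=F(\nu(\lambda^{**}))$, Theorem~\ref{theorem-HH-characterize} yields a dynamical Hamiltonian-Hopf instability at $\lambda^{**}$.

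The only delicate step is the sign bookkeeping in the second paragraph: one must check that $c^2-1>0$ together with $\modndx\parndx=1$ forces $F(\nu)-\nu$ to be negative as $\nu\uparrow 0$, i.e.\ to have the \emph{same} sign there as the sign $\sgn(1-c^2)<0$ that Lemma~\ref{lemma-negative-asymptotic} gives for $\nu\to-\infty$. This is precisely why Lemma~\ref{lemma-negative-asymptotic} cannot supply the needed sign change and the existence of a negative gap must be assumed; once that is understood, the remainder is a routine Intermediate Value Theorem argument combined with the change of variables $\nu\leftrightarrow\lambda$.
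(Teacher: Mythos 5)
Your proof is correct, and it follows the paper's overall strategy (Intermediate Value Theorem applied to the continuous function $\tanh(F(\nu)-\nu)$, with Lemma~\ref{lemma-negative-gap} supplying the positive value in the gap, and Theorem~\ref{theorem-HH-characterize} converting the root of $F(\nu)-\nu$ into a dynamical Hamiltonian-Hopf instability), but you obtain the negative value at a different end of the interval. The paper takes the negative sign directly from Lemma~\ref{lemma-negative-asymptotic}: since $c^2-1>0$, $F(\nu)-\nu<0$ for some large negative $\nu$ away from the points $\nu=-n^2\pi^2/T^2$, and then applies the IVT between that point and the gap; the hypothesis $\modndx\parndx=1$ is only invoked to explain that the two ends $\nu\uparrow 0$ and $\nu\to-\infty$ carry the same sign, i.e.\ that the mechanism of Corollary~\ref{corollary-index-product} is unavailable, so in the paper's argument that hypothesis is not actually needed for the conclusion. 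You instead derive the negative sign as $\nu\uparrow 0$ from \eqref{eq:F-minus-nu-zero-1} of Lemma~\ref{lemma-negative-small} together with the index identities $\modndx=\sgn(\Delta_\nu^\mathrm{H}(0))$, $\parndx=\sgn(c^2T^2-\Delta_\nu^\mathrm{H}(0))$ (valid since $c^2>1$), so your argument genuinely uses $\modndx\parndx=1$; your sign bookkeeping is right, and the genericity assumption $\Delta_\nu^\mathrm{H}(0)\neq c^2T^2$ plus $\modndx\neq 0$ correctly ensures the leading coefficient is strictly positive, hence $F(\nu)-\nu<0$ on some $(-\epsilon,0)$. The two routes are logically equivalent here precisely because, under $c^2>1$ and $\modndx\parndx=1$, the sign near $0$ must agree with the sign $\sgn(1-c^2)<0$ at $-\infty$; a minor advantage of your version is that the IVT is run on a compact interval $[\nu^*,\nu_0]$ bounded away from the resonance set, while a minor advantage of the paper's version is brevity and the observation that the negative gap together with $c^2>1$ alone already forces an instability. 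Your final conversion $\lambda^{**}=i(c^2-1)\sqrt{-\nu^{**}}$ with $\nu(\lambda^{**})=\nu^{**}$ is also correct.
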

\begin{proof}
Since $\modndx\parndx=1$, $F(\nu)-\nu$ has the same sign for large and small negative $\nu$.  However, from Lemma~\ref{lemma-negative-asymptotic}, the condition $c^2-1>0$ ensures that
$F(\nu)-\nu<0$ holds for some (large) negative $\nu$, while from Lemma~\ref{lemma-negative-gap}, the existence of a negative gap in $\Sigma^\mathrm{H}$ ensures that $F(\nu)-\nu>0$ holds for some $\nu<0$.  Applying the Intermediate Value Theorem to the continuous function $\tanh(F(\nu)-\nu)$ and using Theorem~\ref{theorem-HH-characterize} then completes the proof.
\end{proof}
Finally, we have the following.
\begin{corollary}
A superluminal (i.e., $c^2>1$) periodic traveling wave $u=f(x-ct)$ has at least two points of  dynamical Hamiltonian-Hopf instability on the positive imaginary $\lambda$-axis for each open gap of $\Sigma^\mathrm{H}$ with $\nu$ sufficiently negative.
\end{corollary}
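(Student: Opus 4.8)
The plan is to apply the Intermediate Value Theorem to $F(\nu)-\nu$ on the two bands flanking the given gap, just as in the proofs of Corollaries~\ref{corollary-index-product} and \ref{corollary-blowup-in-gaps}, but now controlling the sign on \emph{both} sides of the gap. Fix a superluminal wave, so $c^2-1>0$. The first step is a geometric observation about the band/gap structure at large $-\nu$: from the leading-order asymptotics $\Delta^\mathrm{H}(\nu)=2\cos(T\sqrt{-\nu})+o(1)$ established in the proof of Lemma~\ref{lemma-negative-asymptotic}, $|\Delta^\mathrm{H}(\nu)|$ is bounded away from $2$ except in small neighborhoods of the points $\nu_n:=-n^2\pi^2/T^2$, $n\in\mathbb{Z}$; hence every open gap of $\Sigma^\mathrm{H}$ that is sufficiently negative lies in such a neighborhood for some large $n$, and for $n$ large it is the unique gap in $(\nu_{n+1},\nu_{n-1})$, so the two bands adjacent to it reach at least as far as small neighborhoods of $\nu_{n+1}$ and $\nu_{n-1}$ respectively.

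Second, I would introduce the test points $\mu^\pm:=-(n\mp\tfrac{1}{2})^2\pi^2/T^2$. For $n$ large these lie strictly between the gap and $\nu_{n\mp1}$, hence in the interior of the two adjacent bands, since $\Delta^\mathrm{H}(\mu^\pm)=2\cos((n\mp\tfrac{1}{2})\pi)+o(1)=o(1)$ has modulus $<2$. Moreover $\sin(T\sqrt{-\mu^\pm})=\pm1$, so the factor $\csc(T\sqrt{-\nu})^2$ appearing in the error term of \eqref{eq:F-minus-nu-infinity} stays bounded at $\nu=\mu^\pm$; Lemma~\ref{lemma-negative-asymptotic} then gives $F(\mu^\pm)-\mu^\pm=(c^2-1)\mu^\pm+o(\mu^\pm)$, which is strictly negative once $n$ is large because $c^2-1>0$ and $\mu^\pm\to-\infty$.

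Third, choosing any $\nu_0$ in the given open gap, Lemma~\ref{lemma-negative-gap} gives $F(\nu_0)-\nu_0>0$. For $n$ large one has $\mu^-<\nu_0<\mu^+$ (the gap is $o(\sqrt{-\nu_n})$-close to $\nu_n$ whereas $\mu^\pm$ are at distance of order $\sqrt{-\nu_n}$ from $\nu_n$), so the continuous function $\tanh(F(\nu)-\nu)$ --- continuous on all of $\mathbb{R}$ because $F$ blows up with a definite sign at the critical points of $\Delta^\mathrm{H}$, as noted before Lemma~\ref{lemma-negative-gap} --- is negative at $\mu^-$, positive at $\nu_0$, and negative at $\mu^+$. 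The Intermediate Value Theorem yields a zero of $F(\nu)-\nu$ in $(\mu^-,\nu_0)$ and a second one in $(\nu_0,\mu^+)$. Each such zero $\nu<0$ corresponds via $\nu=\nu(\lambda)$ to the point $\lambda=i(c^2-1)\sqrt{-\nu}$ on the positive imaginary axis, which by Theorem~\ref{theorem-HH-characterize} is a point of dynamical Hamiltonian-Hopf instability; the two zeros produce two distinct such points, and distinct gaps yield disjoint test intervals and hence distinct instability points.

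The step I expect to be the main obstacle is the first one: turning the leading-order asymptotics of $\Delta^\mathrm{H}$ and $\Delta^\mathrm{H}_\nu$ into honest inequalities showing that a sufficiently negative open gap really is localized near some $\nu_n$ and that $\mu^\pm$ really fall in the flanking bands. It is worth stressing that the positivity of $F(\nu)-\nu$ inside the gap must come from the \emph{exact} Lemma~\ref{lemma-negative-gap}, not from \eqref{eq:F-minus-nu-infinity}: inside the gap $\csc(T\sqrt{-\nu})^2$ is large and the asymptotic error term is no longer controlled --- which is precisely the mechanism forcing $F(\nu)-\nu$ to change sign between band and gap.
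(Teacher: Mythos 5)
Your proposal is correct and follows essentially the same route as the paper: positivity of $F(\nu)-\nu$ inside the gap (Lemma~\ref{lemma-negative-gap}), negativity in the flanking bands away from $\nu=-n^2\pi^2/T^2$ via \eqref{eq:F-minus-nu-infinity} with $c^2>1$, and the Intermediate Value Theorem applied to $\tanh(F(\nu)-\nu)$ on each side, finishing with Theorem~\ref{theorem-HH-characterize}. Your explicit half-integer test points $\mu^\pm$ and the localization of sufficiently negative gaps near $\nu_n$ merely spell out carefully the step the paper states tersely as ``on either side of each of these points.''
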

\begin{proof}
This is a consequence of Lemma~\ref{lemma-negative-gap} and Lemma~\ref{lemma-negative-asymptotic}.  The gaps become very small as $\nu\to -\infty$, but each open gap contains a point at which $\tanh(F(\nu)-\nu)=1$, while on either side of each of these points the asymptotic formula \eqref{eq:F-minus-nu-infinity} shows that $\tanh(F(\nu)-\nu)$ is negative if $c^2>1$.  Therefore, there are two roots of $\nu=F(\nu)$ near each (small) open gap in $\Sigma^\mathrm{H}$ if $\nu<0$ is large enough in magnitude, with one root lying in each surrounding band of $\Sigma^\mathrm{H}$, and the desired result follows from Theorem~\ref{theorem-HH-characterize}.
\end{proof}

\subsection{Acknowledgements}
R. Marangell gratefully acknowledges the partial support of the Australian Research Council on grant DP110102775.
P. D. Miller was supported by the National Science Foundation under grant number DMS-1206131, by a Research Fellowship from the Simons Foundation, and is grateful for the additional support of the University of Sydney under an International Research Collaboration Award that sponsored a long-term visit to Sydney during which time this work was completed.

\section{Local analysis of the spectrum near the imaginary axis}
\label{sec:local-analysis}
Here we give the proof of Theorem~\ref{theorem-HH-characterize}.  We will study the spectrum $\sigma$ near a point $\lambda=i\beta$ with $\beta>0$ (the case of $\beta<0$ follows automatically by the real symmetry $\sigma^*=\sigma$, see \cite[Section 3.3.1]{JonesMMP14}).
Since $\sigma$ is closed, it suffices to suppose that $i\beta\in\sigma$, or equivalently, since $\sigma\cap i\mathbb{R}=\sigma^\mathrm{H}\cap i\mathbb{R}$, $i\beta\in\sigma^\mathrm{H}$.
The argument splits into two cases, depending on whether $\lambda=i\beta$ is at the edge of a band of spectrum or not.
\subsection{The case that $\nu(i\beta)\in\Sigma^\mathrm{H}$ is a periodic or antiperiodic eigenvalue of Hill's equation}
In this case, $\lambda=i\beta$ is a simple root of $\Delta^\mathrm{H}(\nu(\lambda))^2-4$, and in particular, $\Delta^\mathrm{H}(\nu(i\beta))=\pm 2$.  Thus, as $\lambda\to i\beta$, 
\begin{equation}
\Delta^\mathrm{H}(\nu(\lambda))=\pm 2 + it_1(\lambda-i\beta) + \mathcal{O}((\lambda-i\beta)^2),
\end{equation}
where
\begin{equation}
t_1:=\left.-i\frac{d}{d\lambda}\Delta^\mathrm{H}(\nu(\lambda))\right|_{\lambda=i\beta}=
\frac{2\beta}{(c^2-1)^2}\Delta_\nu^\mathrm{H}(\nu(i\beta))
\end{equation}
is a real number (nonzero by assumption).  By the quadratic formula, the two Hill Floquet multipliers, which are the roots $\mu^\mathrm{H}$ of $\left(\mu^{\mathrm{H}}\right)^{2}-\Delta^\mathrm{H}\mu^\mathrm{H}+1=0$, have the local representations
\begin{equation}
\begin{split}
\mu^\mathrm{H}(\nu(\lambda))&=\pm 1 +i(\mp i t_1 (\lambda-i\beta))^{1/2}+\mathcal{O}(\lambda-i\beta)\\
\mu^\mathrm{H}(\nu(\lambda))&=\pm 1 -i(\mp i t_1(\lambda-i\beta))^{1/2}+\mathcal{O}(\lambda-i\beta).
\end{split}
\end{equation}
Scott's substitution implies that the corresponding Floquet multipliers of \eqref{eq:spectral-problem}
are given by 
\begin{equation}
\mu(\lambda)=e^{\lambda cT/(c^2-1)}\mu^\mathrm{H}(\nu(\lambda)).  
\label{eq:Scott-multipliers}
\end{equation}
Since the exponential factor is differentiable at $\lambda=i\beta$, it follows that the local representations of the two multipliers $\mu(\lambda)$ are 
\begin{equation}
\begin{split}
\mu(\lambda)&=\pm e^{i\beta cT/(c^2-1)} + ie^{i\beta cT/(c^2-1)}(\mp it_1 (\lambda-i\beta))^{1/2}+\mathcal{O}(\lambda-i\beta)\\
\mu(\lambda)&=\pm e^{i\beta cT/(c^2-1)} - ie^{i\beta cT/(c^2-1)}(\mp it_1 (\lambda-i\beta))^{1/2}+\mathcal{O}(\lambda-i\beta)
\end{split}
\end{equation}
The spectrum $\sigma$ is characterized by the condition $|\mu(\lambda)|=1$ for one or the other of the above two multipliers.  Therefore to have $\lambda\in\sigma$ it is necessary that $\mathrm{Im}((\mp it_1(\lambda-i\beta))^{1/2})=\mathcal{O}(\lambda-i\beta)$ as $\lambda\to i\beta$.  This, in turn, 
requires that $\sigma$ be locally tangent to the imaginary axis at $\lambda=i\beta$, but only in the direction that $\mp it_1(\lambda-i\beta)>0$, that is, in the direction of the band of $\sigma$ on the imaginary axis emanating from $\lambda=i\beta$.  But the latter purely imaginary spectrum already exhausts the full multiplicity of $\sigma$, and hence $\sigma$ agrees exactly with $\sigma^\mathrm{H}$ in a complex neighborhood of $\lambda=i\beta$ and therefore is purely imaginary.  There is no dynamical Hamiltonian-Hopf instability.  This is actually an independent proof of Corollary~\ref{corollary-endpoints-same};  it remains to show consistency with Theorem~\ref{theorem-HH-characterize}.  But consistency follows immediately because at all simple roots of $\Delta^\mathrm{H}(\nu)^2-4$ we have $F(\nu)=0$, but $\nu=\nu(i\beta)<0$ by the hypothesis that $\beta>0$.  

\subsection{The case that $\nu(i\beta)$ is in the interior of $\Sigma^\mathrm{H}$} 
If on the contrary $\lambda=i\beta$ is in the interior of a band of $\sigma$ on the imaginary axis, then either the strict inequality $\Delta^\mathrm{H}(\nu(i\beta))^2<4$ holds, or we have a double point of spectrum and instead $\Delta^\mathrm{H}(\nu(i\beta))^2=4$ and $\Delta_\nu^\mathrm{H}(\nu(i\beta))=0$ but $\Delta^\mathrm{H}_{\nu\nu}(\nu(i\beta))\neq 0$, having the opposite sign to
$\Delta^\mathrm{H}(\nu(i\beta))=\pm 2$ \cite[Lemma 2.5]{MagnusW04}.  In both cases, the two Hill Floquet multipliers $\mu^\mathrm{H}(\nu(\lambda))$ may be considered as locally analytic functions
of $\lambda$ because the quadratic discriminant $4-\Delta^\mathrm{H}(\nu(\lambda))^2$ is analytic and either strictly positive at $\lambda=i\beta$ or nonnegative and vanishing to precisely second order at $\lambda=i\beta$.  

In the former case, the locally distinct analytic Hill Floquet multipliers have Taylor expansions about $\lambda=i\beta$ of the form
\begin{equation}
\begin{split}
\mu^\mathrm{H}(\nu(\lambda))&=\frac{1}{2}\left(\Delta^\mathrm{H}(\nu(\lambda))\pm i\sqrt{4-\Delta^\mathrm{H}(\nu(\lambda))^2}\right)\\
&=\frac{1}{2}\left(t_0 \pm i\sqrt{4-t_0^2}\right) + \frac{1}{2}\left(it_1 \pm\frac{t_0t_1}{\sqrt{4-t_0^2}}\right)(\lambda-i\beta) + \mathcal{O}((\lambda-i\beta)^2),
\end{split}
\label{eq:muH-expansion}
\end{equation}
where the corresponding Taylor expansion of the Hill discriminant is
\begin{equation}
\Delta^\mathrm{H}(\nu(\lambda)))=t_0+it_1(\lambda-i\beta)+ \mathcal{O}((\lambda-i\beta)^2),\quad\lambda\to i\beta,
\end{equation}
with
\begin{equation}
t_0=\Delta^\mathrm{H}(\nu(i\beta))\in (-2,2),\quad t_1=\frac{2\beta}{(c^2-1)^2}\Delta^{\mathrm{H}}_\nu(\nu(i\beta))\in\mathbb{R}.
\label{eq:t0-t1-express}
\end{equation}
In the second line of \eqref{eq:muH-expansion}, the (strictly, since $t_0\in (-2,2)$) positive square root is meant.  This implies that there is a real angle $\theta_0\in (0,\pi)$ such that
\begin{equation}
\frac{1}{2}\left(t_0\pm i\sqrt{4-t_0^2}\right)=e^{\pm i\theta_0},
\end{equation}
and hence \eqref{eq:muH-expansion} could also be written simply as
\begin{equation}
\mu^\mathrm{H}(\nu(\lambda))=e^{\pm i\theta_0} \left(1\pm\frac{t_1}{\sqrt{4-t_0^2}}(\lambda-i\beta)+\mathcal{O}((\lambda-i\beta)^2)\right).
\label{eq:muH-expansion-rewrite}
\end{equation}

The Floquet multipliers for the linearized Klein-Gordon spectral problem \eqref{eq:spectral-problem}
are related explicitly to those of Hill's equation \eqref{eq:Hill} by the identity \eqref{eq:Scott-multipliers}.
Expanding the exponential factor about $\lambda=i\beta$ and using \eqref{eq:muH-expansion-rewrite} yields the corresponding Taylor expansions of the analytic functions representing the two multipliers $\mu(\lambda)$ in the form
\begin{equation}
\begin{split}
\mu(\lambda)&=e^{i\beta cT/(c^2-1)}\left(1+\frac{cT}{c^2-1}(\lambda-i\beta)+\mathcal{O}((\lambda-i\beta)^2)\right)\mu^\mathrm{H}(\nu(\lambda))\\
&=e^{i[\beta cT/(c^2-1)\pm \theta_0]}\left(1 + \delta_\pm(\lambda-i\beta)+\mathcal{O}((\lambda-i\beta)^2)\right),
\end{split}
\end{equation}
where
\begin{equation}
\delta_\pm:=\frac{cT}{c^2-1}\pm \frac{t_1}{\sqrt{4-t_0^2}}.
\end{equation}

If $\delta_\pm$ are both nonzero, then both multipliers are locally univalent near $\lambda=i\beta$ and hence are locally invertible analytic functions.  Consequently the equation $\mu(\lambda)=e^{i\theta}$, $\theta\in\mathbb{R}$ will be solvable for two analytic functions $\lambda=\lambda(\theta)$ in the vicinity of $\theta=\beta cT/(c^2-1)\pm\theta_0$.  Since $\delta_\pm$ are real, it is clear that these functions describe two smooth curves of spectrum $\sigma$ that are tangent to the imaginary axis at $\lambda=i\beta$.  Since the imaginary spectrum present on the imaginary axis near $\lambda=i\beta$ already exhausts the full multiplicity of $\sigma$, the smooth curves $\lambda=\lambda(\theta)$ are purely imaginary near $\lambda=i\beta$.

On the other hand, if either $\delta_+=0$ or $\delta_-=0$, then $\mu'(i\beta)=0$ for one of the multipliers and therefore this multiplier will not be invertible, making the spectrum $\sigma$ locally more complicated.  In the generic case that $\mu''(i\beta)\neq 0$, the equation $\mu(\lambda)=e^{i\theta}$ involving the corresponding multiplier will give rise to two curves of $\sigma$ that cross perpendicularly at $\lambda=i\beta$, one of which is tangent to the imaginary axis.  The same equation involving the other (invertible) multiplier will give rise to a single curve of $\sigma$ tangent to the imaginary axis at $\lambda=i\beta$.  
More generally, if the first nonzero derivative of the noninvertible multiplier is $\mu^{(p)}(i\beta)\neq 0$,
$p>1$, there will be $p$ smooth curves of $\sigma$ crossing at $\lambda=i\beta$ with equal angles, exactly one of which is tangent to the imaginary axis, and from the invertible multiplier there will be an additional curve of $\sigma$ tangent to the imaginary axis at $\lambda=i\beta$.  The two curves of spectrum locally tangent to the imaginary axis must in fact coincide locally with the imaginary axis, by the same argument as in the locally univalent case.

The condition for the existence at $\lambda=i\beta$ of a (strong, because the unstable spectrum consists of curves transverse to the imaginary axis) dynamical Hamiltonian-Hopf instability is therefore
\begin{equation}
\delta_+\delta_-=\frac{c^2T^2}{(c^2-1)^2}-\frac{t_1^2}{4-t_0^2}=0.
\end{equation}
Using \eqref{eq:t0-t1-express} allows $(c^2-1)^2\delta_+\delta_-$ to be rewritten as
\begin{equation}
(c^2-1)^2\delta_+\delta_-=c^2T^2+\frac{4\nu\Delta^{\mathrm{H}}_\nu(\nu)^2}{4-\Delta^\mathrm{H}(\nu)^2},\quad\nu=\nu(i\beta)=-\frac{\beta^2}{(c^2-1)^2}<0.
\label{eq:condition-rewrite}
\end{equation}
Since by assumption on $\beta$ we have $\Delta^\mathrm{H}(\nu)^2<4$ and $\Delta^{\mathrm{H}}_\nu(\nu)\neq 0$, the condition $\delta_+\delta_-=0$ is equivalent to the condition $\nu=F(\nu)$ with $\nu=\nu(i\beta)$ and $F$ defined by \eqref{eq:F-define}.

It remains to consider the case that $\lambda=i\beta$ is a double point of $\sigma^\mathrm{H}$.
At a double point we have the Taylor expansion of the Hill discriminant in the form
\begin{equation}
\Delta^\mathrm{H}(\nu(\lambda))=\pm 2 \pm 2t_2^2(\lambda-i\beta)^2 + \mathcal{O}((\lambda-i\beta)^3),\quad\lambda\to i\beta,
\label{eq:double-point-discriminant}
\end{equation}
where
\begin{equation}
t_2 :=\frac{\beta}{(c^2-1)^2}\sqrt{ \mp 2\Delta^\mathrm{H}_{\nu\nu}(\nu(i\beta))}>0.
\end{equation}
It follows that the two analytic Hill Floquet multipliers are
\begin{equation}
\begin{split}
\mu^\mathrm{H}(\nu(\lambda))&=\pm 1 +t_2(\lambda-i\beta) +\mathcal{O}((\lambda-i\beta)^2)\\
\mu^\mathrm{H}(\nu(\lambda))&=\pm 1-t_2(\lambda-i\beta)+\mathcal{O}((\lambda-i\beta)^2).
\end{split}
\end{equation}
Using \eqref{eq:Scott-multipliers}, we then have
\begin{equation}
\begin{split}
\mu(\lambda)&=\pm e^{i\beta cT/(c^2-1)}\left(1+\hat{\delta}_+(\lambda-i\beta)+\mathcal{O}((\lambda-i\beta)^2)\right)\\
\mu(\lambda)&=\pm e^{i\beta cT/(c^2-1)}\left(1+\hat{\delta}_-(\lambda-i\beta)+\mathcal{O}((\lambda-i\beta)^2)\right),
\end{split}
\end{equation}
where
\begin{equation}
\hat{\delta}_\pm:=\frac{cT}{c^2-1}\pm t_2.
\end{equation}
As before, there will exist a (strong) dynamical Hamiltonian-Hopf instability at the double point $\lambda=i\beta$
if and only if $\hat{\delta}_+\hat{\delta}_-=0$.  Comparing with \eqref{eq:condition-rewrite}, it is clear that to complete the proof one must simply check that whenever $i\beta$ is a double point of $\sigma^\mathrm{H}$, 
\begin{equation}
\lim_{\lambda\to i\beta}\frac{4\nu(\lambda)\Delta_\nu^\mathrm{H}(\nu(\lambda))^2}{4-\Delta^\mathrm{H}(\nu(\lambda))^2}=-(c^2-1)^2t_2^2.
\label{eq:limit-expression}
\end{equation}
However, using the fact that 
\begin{equation}
4\nu(\lambda)\Delta_\nu^\mathrm{H}(\nu(\lambda))^2=(c^2-1)^2\frac{d}{d\lambda}\Delta^\mathrm{H}(\nu(\lambda)),
\end{equation}
it is easy to confirm that \eqref{eq:limit-expression} holds as a consequence of
\eqref{eq:double-point-discriminant}. This concludes the proof of Theorem \ref{theorem-HH-characterize}. \qed

\end{document}